\numberwithin{equation}{section}
\numberwithin{figure}{section}
  \theoremstyle{definition}
  \newtheorem*{problem*}{\protect\problemname}
\theoremstyle{plain}
\newtheorem{thm}{\protect\theoremname}
  \theoremstyle{plain}
  \newtheorem{lem}[thm]{\protect\lemmaname}
  \theoremstyle{plain}
  \newtheorem{cor}[thm]{\protect\corollaryname}
  \theoremstyle{definition}
  \newtheorem{defn}[thm]{\protect\definitionname}
  \theoremstyle{plain}
  \newtheorem{prop}[thm]{\protect\propositionname}
  \theoremstyle{plain}
  \newtheorem{conjecture}[thm]{\protect\conjecturename}
\newcommand{\xyR}[1]{
  \xydef@\xymatrixrowsep@{#1}}
\newcommand{\xyC}[1]{
  \xydef@\xymatrixcolsep@{#1}}
\let\myTOC\tableofcontents
\renewcommand\tableofcontents{%
  \pdfbookmark[1]{\contentsname}{}
  \myTOC }
\def\LyX{\texorpdfstring{%
  L\kern-.1667em\lower.25em\hbox{Y}\kern-.125emX\@}
  {LyX}}
  \providecommand{\conjecturename}{Conjecture}
  \providecommand{\corollaryname}{Corollary}
  \providecommand{\definitionname}{Definition}
  \providecommand{\lemmaname}{Lemma}
  \providecommand{\problemname}{Problem}
  \providecommand{\propositionname}{Proposition}
\providecommand{\theoremname}{Theorem}
\begin{document}

\title{Geometric Progression-Free Sequences with Small Gaps}

\author{Xiaoyu He}

\email{xiaoyuhe@college.harvard.edu}

\address{Eliot House, Harvard College, Cambridge, MA 02138.}

\date{\today}
\begin{abstract}
Various authors, including McNew, Nathanson and O'Bryant, have recently
studied the maximal asymptotic density of a geometric progression
free sequence of positive integers. In this paper we prove the existence
of geometric progression free sequences with small gaps, partially
answering a question posed originally by Beiglböck et al. Using probabilistic
methods we prove the existence of a sequence $T$ not containing any
$6$-term geometric progressions such that for any $x\geq1$ and $\varepsilon>0$
the interval $[x,x+C_{\varepsilon}\exp((C+\varepsilon)\log x/\log\log x)]$
contains an element of $T$, where $C=\frac{5}{6}\log2$ and $C_{\varepsilon}>0$
is a constant depending on $\varepsilon$. As an intermediate result
we prove a bound on sums of functions of the form $f(n)=\exp(-d_{k}(n))$
in very short intervals, where $d_{k}(n)$ is the number of positive
$k$-th powers dividing $n$, using methods similar to those that
Filaseta and Trifonov used to prove bounds on the gaps between $k$-th
power free integers.
\end{abstract}
\maketitle

\section*{Introduction}

Let $k\ge3$ be an integer, and $r$ be a positive rational number.
A geometric progression of length $k$ with common ratio $r$ is a
sequence $(a_{0},\ldots,a_{k-1})$ of nonzero real numbers for which
\[
a_{i}=ra_{i-1},i=1,\ldots,k-1.
\]
A $k$-geometric progression, or $k$-GP, is a geometric progression
of length $k$. Such a geometric progression is called \emph{trivial}
if $r=1$ and henceforth we consider only nontrivial progressions.

Rankin \cite{Rankin} introduced the notion of a $k$-GP-free sequence,
which for our purposes is a sequence of positive integers that contains
no nontrivial $k$-geometric progressions. Whereas a theorem of Szemerédi
\cite{Szemeredi} shows that $k$-term arithmetic-progression free
sequences must have zero upper density in the naturals, there exist
$3$-GP-free sequences with positive asymptotic density. For instance,
the sequence of all squarefree positive integers avoids all geometric
progressions of length $3$ or more, and has asymptotic density $\zeta(2)^{-1}=\frac{6}{\pi^{2}}$.

Let $A$ be a $k$-GP-free sequence. The question of finding the maximal
possible asymptotic density $d(A)$, or else the upper density $d_{U}(A)$,
of $A$ has been a subject of recent study \cite{BBHS,BG,McNew,NO,Riddell}.
For an exposition of progress on this problem and the tightest known
bounds on $d_{U}(A)$, along with constructions of $A$ with nearly
optimal upper density, see the paper of McNew \cite{McNew}.

In this paper we are interested in a uniform version of this density
problem. In particular, we would like to settle the existence of $k$-GP-free
sequences with small gaps. Beiglböck, Bergelson, Hindman, and Strauss
\cite{BBHS} proposed the following problem, which has implications
in ergodic theory.
\begin{problem*}
Does there exist a $c>1$, a $k\geq3$, and an increasing $k$-GP-free
sequence $T=\{t_{i}\}_{i\in\mathbb{N}}$ of positive integers such
that $t_{i+1}-t_{i}\leq c$ for all $i\in\mathbb{N}$? Such a sequence
with bounded gaps is called \emph{syndetic.}
\end{problem*}
We use the standard notations $f(x)=O(g(x))$ if there exists a constant
$A>0$ for which $f(x)<Ag(x)$ for all $x$, and $f(x)=o(g(x))$ if
for any constant $A>0$ the inequality $f(x)<Ag(x)$ holds for $x$
sufficiently large. Furthermore we write $f(x)=\Omega(g(x))$ if there
exists a constant $B>0$ for which $f(x)>Bg(x)$ for all $x$. In
both cases the range of $x$ depends on context but is usually the
natural numbers. We will often also use the shorthand $f(x)\ll g(x)$
for $f(x)=O(g(x))$, and similarly $f(x)\gg g(x)$ for $f(x)=\Omega(g(x))$. 

Note that the sequence of squarefree integers fails to be syndetic,
since there exist gaps as large as $\Omega(\log x/\log\log x)$ within
the squarefree integers up to $x$. Erd\H{o}s \cite{Erdos} knew this
elementary result in 1951, though he was not the first. He showed
that if $\{s_{i}\}_{i\in\mathbb{N}}$ is the sequence of squarefree
integers in increasing order, then
\[
s_{i+1}-s_{i}>(1+o(1))\frac{\pi^{3}}{6}\frac{\log s_{i}}{\log\log s_{i}}
\]
for infinitely many values of $i$. By roughly the same method, Rankin
\cite{Rankin2} proved that if $\{p_{i}\}_{i\in\mathbb{N}}$ is the
sequence of primes in increasing order, then 
\[
p_{i+1}-p_{i}>(1+o(1))e^{\gamma}\frac{\log p_{i}\log\log p_{i}\log\log\log\log p_{i}}{(\log\log\log p_{i})^{2}}
\]
for infinitely many values of $i$, where $\gamma$ is the Euler-Mascheroni
constant. The constant $e^{\gamma}$ in Rankin's bound was improved
several times; recently, it was replaced by a function growing to
infinity by Ford, Green, Konyagin and Tao \cite{FGKT} concurrently
with Maynard \cite{Maynard}, settling a \$10,000 Erd\H{o}s problem.
Most recently, Ford, Green, Konyagin, Maynard, and Tao combined the
previous methods and showed
\[
p_{i+1}-p_{i}\gg\frac{\log p_{i}\log\log p_{i}\log\log\log\log p_{i}}{\log\log\log p_{i}}
\]
for infinitely many $i$ and an effective implied constant, removing
a $\log\log\log p_{i}$ factor from the denominator. For a history
of these results and the current progress, see their paper \cite{FGKMT}.

It is a conjecture of Cramér \cite{Cramer} that
\[
\limsup_{i\rightarrow\infty}\frac{p_{i+1}-p_{i}}{(\log p_{i})^{2}}=1,
\]
but the best upper bound available is $p_{i+1}-p_{i}=O(p_{i}^{0.525})$
due to Baker, Harman, and Pintz \cite{BHP}. For a discussion of Cramér's
model and its deficiencies see the paper of Pintz \cite{Pintz}.

Large gaps between the squarefree integers up to $x$ are also poorly
understood. In this direction the tightest bound is due to Filaseta
and Trifonov \cite{FT}, that the largest gaps between squarefree
integers are at most $O(x^{\frac{1}{5}}\log x)$. Trifonov \cite{Trifonov}
established the generalization that the largest gaps between the $k$-th
power free integers up to $x$ are at most $O(x^{1/(2k+1)}\log x)$,
and assuming the \emph{$abc$} conjecture, Granville \cite{Granville}
was able to proved that the gaps are $O(x^{\varepsilon})$ for every
$\varepsilon>0$.

We are naturally interested in an unconditional construction of $k$-GP-free
sequences with gaps of size $O(x^{\varepsilon})$ for every $\varepsilon>0$.
Our main theorem proves the existence of such sequences using the
probabilistic method.
\begin{thm}
\label{thm:main}There exists a $6$-GP-free sequence $T$ of positive
integers $\{t_{i}\}_{i\in\mathbb{N}}$ such that for every $\varepsilon>0$,
there exists a constant $C_{\varepsilon}>0$ for which

\[
t_{i+1}-t_{i}<C_{\varepsilon}\exp\Big(\Big(\frac{5}{6}\log2+\varepsilon\Big)\frac{\log t_{i}}{\log\log t_{i}}\Big),
\]
holds for all $i\in\mathbb{N}$.
\end{thm}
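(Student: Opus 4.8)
The plan is to realize $T$ by the alteration form of the probabilistic method, using the weight $f(n)=\exp(-d_5(n))$ from the intermediate estimate. The choice $k=5$ is dictated by the following elementary observation: if $a,ar,\dots,ar^5$ is a nontrivial $6$-GP and $\ell$ is a prime with $v_\ell(r)\neq 0$, then the valuations $v_\ell(ar^j)=v_\ell(a)+j\,v_\ell(r)$ form a nonconstant arithmetic progression of length $6$, whose values span a range of at least $5$; since these values are nonnegative, they cannot all lie in $\{0,1,2,3,4\}$. Hence every fifth-power-free set is automatically $6$-GP-free, and more generally any set on which the fifth-power divisibility of the terms of a putative progression is obstructed will be $6$-GP-free. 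Accordingly I would let $S$ be the random set obtained by placing each positive integer $n$ into $S$ independently with probability $f(n)=\exp(-d_5(n))$, and then define $T=S\setminus D$, where $D$ contains one term (say the largest) of every nontrivial $6$-GP all of whose terms lie in $S$. By construction $T$ is $6$-GP-free, so the entire content of the theorem is the gap bound, which must be proved to hold almost surely.

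For the gaps, fix $\varepsilon>0$, write $H=H(x)=C_\varepsilon\exp\big((\tfrac56\log2+\varepsilon)\log x/\log\log x\big)$ and $I=[x,x+H]$. The intermediate estimate provides the worst-case lower bound $\sum_{n\in I}f(n)\gg_\varepsilon\log x$, valid uniformly for every such short interval; by the scaling of that estimate, enlarging the constant $C_\varepsilon$ I may assume $\sum_{n\in I}f(n)\ge K\log x$ for any fixed $K$. Since $|S\cap I|=\sum_{n\in I}\mathbf 1[n\in S]$ is a sum of independent Bernoulli variables with this mean, a Chernoff bound with $K$ large gives $\Pr\big(|S\cap I|<\tfrac12 K\log x\big)\le x^{-2}$ for all large $x$. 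Summing over integers $x$ and applying Borel--Cantelli, almost surely $|S\cap I|\gg\log x$ for every sufficiently large $x$.

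It remains to control the deletions, and this is where the real work lies. Writing $Y_I$ for the number of terms of $I$ that are removed, $Y_I$ is at most the number of nontrivial $6$-GPs meeting $I$ whose terms all lie in $S$. The point of the weight is that a nontrivial progression is forced to be highly fifth-power-divisible: if its ratio is $u/v$ in lowest terms then its extreme terms are divisible by $v^5$ and by $u^5$ with $u\ge 2$, so $f$ is exponentially small on them and the expected number of surviving progressions is correspondingly suppressed. I would bound $\mathbb E\,Y_I$ by a weighted count of $6$-GPs meeting $I$, show that this weighted count is $o\big(\sum_{n\in I}f(n)\big)$ uniformly in $x$, and then pass to an almost-sure statement by Markov's inequality together with Borel--Cantelli. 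Combined with the previous paragraph this yields $|T\cap I|\ge|S\cap I|-Y_I>0$ almost surely for all large $x$, which is exactly the asserted bound $t_{i+1}-t_i<H(t_i)+1$ after absorbing the additive constant into $C_\varepsilon$; running the argument along $\varepsilon=1/m$ and intersecting the resulting full-measure events produces a single $T$ valid for every $\varepsilon$.

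The main obstacle is the deletion bound of the third paragraph, and specifically making it uniform over the \emph{worst-case} short intervals --- those engineered to be saturated with multiples of fifth powers, where $|S\cap I|$ can be as small as a constant times $\log x$. On such intervals one cannot afford to lose more than $o(\log x)$ points to $D$, so one needs an upper bound on the weighted number of $6$-GPs meeting $I$ that is genuinely smaller than the lower bound for $\sum_{n\in I}f(n)$, even in the adversarial regime. This is a short-interval divisibility problem of exactly the Filaseta--Trifonov type underlying the intermediate estimate, and it is also where the precise constant $\tfrac56\log2=\tfrac{k}{k+1}\log2$ with $k=5$ is forced, as the threshold at which the adversary can no longer drive both sums down simultaneously.
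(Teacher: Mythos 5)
Your construction is genuinely different from the paper's, and the difference is exactly where the gap lies. The paper does not use the alteration method at all: it makes one independent fair coin flip per nontrivial $6$-GP $(ab^{5},ab^{4}c,ab^{3}c^{2},ab^{2}c^{3},abc^{4},ac^{5})$ to delete one of the two \emph{middle} terms, so $T$ is $6$-GP-free by construction and the only event to control is $T\cap(x,x+Ch]=\emptyset$. There the probability that $n$ survives is at least $2^{-d_{3,2}(n)}$, since $n$ occurs as a middle term of at most $d_{3,2}(n)$ progressions; the two middle terms of any one progression differ by at least $\sqrt{n}>h$, so distinct elements of a short interval lie in distinct progressions and their survival events are independent; and the failure probability is at most $\exp\big(-\sum_{n}2^{-d_{3,2}(n)}\big)$, handled by the short-interval lower bound for $S_{3,2}$ and a union bound over $x$. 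This is also where the constant comes from: $\tfrac{5}{6}\log 2=C_{2,3}=\log 2\,(\tfrac12+\tfrac13)$ is attached to the two-parameter function $d_{3,2}$ counting representations $a^{3}b^{2}\mid n$, not to $\tfrac{k}{k+1}\log 2$ with $k=5$ as you assert; that identification is a numerical coincidence, and the paper's intermediate estimate is proved only for $d_{i,j}$ with $i,j\ge 2$, so your weight $\exp(-d_{5}(n))$ is not actually covered by it (an analogous estimate for $d_{5}$ would have threshold $\tfrac{\log 2}{5}$, not $\tfrac56\log 2$).

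The substantive gap is the deletion bound, which you flag as ``where the real work lies'' but do not supply, and which fails in the form you propose. Your key claim is that the expected weighted number of surviving $6$-GPs meeting $I$ is $o\big(\sum_{n\in I}f(n)\big)$; it is not. Already the ratio-$2$ progressions $(m,2m,\dots,32m)$ with $32m\in I$ and $m$ typical survive with probability bounded below by an absolute constant, so $\mathbb{E}\,Y_{I}\gg H$, a fixed positive proportion of $\mathbb{E}|S\cap I|\asymp e^{-1}H$. One would instead need a comparison of constants together with concentration for $Y_{I}$ (a sum of dependent indicators), and a separate argument in the adversarial intervals where $\sum_{n\in I}f(n)$ is far below $H$. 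Moreover, Markov plus Borel--Cantelli as you describe it requires $\mathbb{E}\,Y_{I}$ to be polynomially small in $x$, which is hopeless; and the set of $6$-GPs meeting $I$ in their smallest term $ab^{5}$ has $c$ unbounded, so the weighted count you propose to bound is an infinite sum whose convergence already needs justification. None of this is necessarily fatal to an alteration-style proof, but each point requires an argument you have not given, whereas the paper's one-coin-per-progression process avoids the deletion step entirely.
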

Note that the previously best known unconditional result in this direction
is the bound by Trifonov on the sequence of $5$-th power free positive
integers, with gaps of size at most $O(x^{1/11}\log x)$. 

Let $d_{i}(n)$ denote the number of $i$-th powers dividing $n$,
and $d_{i,j}(n)$ be the number of pairs $(a,b)\in\mathbb{N}^{2}$
with $a^{i}b^{j}|n$. In the next section we prove a bound on short
sums of the form
\[
S_{i,j}(x,h,D)=\sum_{x<n\leq x+h}\exp(-Dd_{i,j}(n)),
\]
where $h$ is on the order of $\exp\Big((\frac{5}{6}\log2+\varepsilon)\frac{\log x}{\log\log x}\Big)$,
and $D$ is some constant. Once this computation is complete we will
use the probabilistic method to show that a sequence randomly generated
by removing terms from every $6$-GP has gaps of the desired size
with nonzero probability.

\section*{Sums of Divisor Functions in Short Intervals}

We will be interested on a lower bound on the quantity
\[
S_{i,j}(x,h,D)=\sum_{x<n\leq x+h}\exp(-Dd_{i,j}(n)),
\]
where $h$ is small and $D$ is some fixed positive constant.

The following uniform bound was proven by Nair and Tenenbaum \cite{NT}.
It is a generalization of a result of Shiu \cite{Shiu} to a larger
class of functions. We state the relevant special case of their theorem
here. Let $\mathcal{M}$ be the class of arithmetic functions $f:\mathbb{N}\rightarrow\mathbb{N}$
satisfying
\begin{enumerate}
\item (Submultiplicativity) The function $f$ satisfies $f(mn)\leq f(m)f(n)$
for $(m,n)=1$, and $f(1)=1$.
\item There exists a fixed $A>0$ such that for any prime $p,$ $f(p^{n})\leq A^{n}$
for all $n\in\mathbb{N}$.
\item For all $\varepsilon>0$, $f(n)=O(n^{\epsilon})$.\end{enumerate}
\begin{thm}
\label{thm:shiu}\cite{NT} Suppose that $f$ is a function in the
class $\mathcal{M}$. For any $0<\beta<\frac{1}{2}$ and a function
$h(x)\gg x^{\beta}$, we have
\[
\sum_{x<n\leq x+h(x)}f(n)\ll\frac{h(x)}{\log x}\exp\Big(\sum_{p\leq x}\frac{f(p)}{p}\Big),
\]
where the implicit constant depends only on $\beta$.
\end{thm}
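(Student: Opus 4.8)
The plan is to prove this Shiu--Nair--Tenenbaum bound by reducing the short sum to a sieve estimate, using all three defining properties of the class $\mathcal{M}$. Fix a small parameter $\eta>0$ (to be chosen in terms of $\beta$) and set $z=x^{\eta}$. Each $n\in(x,x+h]$ factors uniquely as $n=sr$, where $s$ is the $z$-smooth part of $n$ (the largest divisor composed solely of primes $\le z$) and $r$ is the $z$-rough cofactor. By the submultiplicativity in property (1), $f(n)\le f(s)f(r)$, so the problem splits into a smooth sum, which should produce the Euler-product factor, and a rough count, which should produce the factor $h/\log x$.

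First I would dispose of the rough part. Since every prime factor of $r$ exceeds $z=x^{\eta}$ while $r\le 2x$, the number of prime factors of $r$ counted with multiplicity is at most $\lceil 1/\eta\rceil+O(1)$. Properties (1) and (2) then give $f(r)\le A^{\Omega(r)}=O_{\eta}(1)$, so the rough factor contributes only a constant, which may be absorbed into the implied constant. It therefore suffices to bound $\sum_{s}f(s)\,N(s)$, where the sum runs over $z$-smooth $s$ and $N(s)=\#\{r\ \text{$z$-rough}:\ x/s<r\le(x+h)/s\}$ counts rough integers in a short interval.

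Next I would estimate $N(s)$ by the fundamental lemma of the sieve. When $s$ is not too large, the interval $(x/s,(x+h)/s]$ has length $h/s\gg x^{\beta-\eta}$, a genuine power of $x$, so sieving out the primes $\le z$ yields $N(s)\ll (h/s)\prod_{p\le z}(1-1/p)\ll h/(s\log z)$ by Mertens. Summing gives
\[
\sum_{s}f(s)N(s)\ll\frac{h}{\log z}\sum_{s\ z\text{-smooth}}\frac{f(s)}{s}
\ll\frac{h}{\log z}\prod_{p\le z}\Big(1+\frac{f(p)}{p}+\frac{f(p^{2})}{p^{2}}+\cdots\Big).
\]
Using $f(p^{a})\le A^{a}$ to absorb the higher prime powers into a convergent $O(1)$ factor, the product is $\ll\exp\big(\sum_{p\le z}f(p)/p\big)$; since $f\ge0$ this is at most $\exp\big(\sum_{p\le x}f(p)/p\big)$, and as $\log z=\eta\log x\asymp\log x$ we recover exactly the claimed bound $\frac{h}{\log x}\exp\big(\sum_{p\le x}f(p)/p\big)$.

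The main obstacle is the contribution of those $n$ whose smooth part $s$ is so large that the rough cofactor lives in too short an interval for the sieve to be effective. These are precisely the $n$ with an unusually large $z$-smooth divisor, and the key point is that they are rare: the count of such integers up to $2x$ is governed by the Dickman function $\rho(u)$ with $u\asymp 1/\eta$, which decays faster than any power of $x$ as $\eta\to0$. Bounding $f(n)\ll x^{\varepsilon}$ crudely via property (3) on this sparse set, I expect its total contribution to be $o(h/\log x)$ once $\eta$ is taken small enough relative to $\beta$. The delicate balancing of the smooth/rough threshold $\eta$ against the interval exponent $\beta$ --- where the hypothesis $\beta<\tfrac12$ guarantees enough room for the sieve's level of distribution and keeps all error terms negligible --- is the technical crux of the argument.
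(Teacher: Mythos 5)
First, a point of comparison: the paper does not prove this statement at all --- it is quoted as a known special case of a theorem of Nair and Tenenbaum \cite{NT} (itself descended from Shiu's theorem \cite{Shiu}), so there is no in-paper proof to measure your argument against. That said, your sketch does follow the broad skeleton of the Shiu-type argument: the factorization $n=sr$ into smooth and rough parts, the sieve bound $N(s)\ll h/(s\log z)$, and the Euler-product evaluation of $\sum_{s}f(s)/s$ are genuinely where the main term comes from. (One local slip there: controlling $\sum_{a\ge2}f(p^{a})p^{-a}$ via $f(p^{a})\le A^{a}$ fails for the finitely many primes $p\le A$, where that series need not even converge; you must invoke condition (3), $f(p^{a})\ll_{\varepsilon}p^{a\varepsilon}$, for those primes instead.)

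The genuine gap is exactly at the point you defer to as ``the technical crux'': the $n\in(x,x+h]$ whose $z$-smooth part exceeds the sieve threshold. For \emph{fixed} $\eta$ and a fixed threshold $x^{\delta}$, the density of integers whose $x^{\eta}$-smooth part exceeds $x^{\delta}$ is governed by $\rho(\delta/\eta)$, which is a positive \emph{constant}: it decays as $\eta\to0$, but not as $x\to\infty$. So the exceptional set has $\gg_{\eta,\delta}h$ elements, not $hx^{-c}$, and bounding $f\le C_{\varepsilon}x^{\varepsilon}$ on it produces a term of size $x^{\varepsilon}h$, which overwhelms the target $\frac{h}{\log x}\exp\big(\sum_{p\le x}f(p)/p\big)$ whenever the latter is only $h(\log x)^{O(1)}$ --- precisely the situation for the functions $d_{i,j}$ (with $d_{i,j}(p)=1$) that this paper feeds into the theorem. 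Shrinking $\eta$ with $x$ to force genuine decay destroys the bound $f(r)=O_{\eta}(1)$ on the rough cofactor, so the two halves of your argument cannot be made compatible; and even the transfer of the global Dickman density to the short interval is not free, since summing $h/d+1$ over smooth divisors $d$ leaves non-negligible $+1$ terms and slowly decaying reciprocal sums. The actual proof avoids all of this by never discarding $f$ down to $x^{\varepsilon}$: it uses an adaptive decomposition $n=ab$ into the initial segment $a$ of prime factors of multiplicative size at most $x^{\delta}\le h$ (so $h/a+1\ll h/a$) and the cofactor $b$ with $P^{-}(b)\ge P^{+}(a)$, and controls the exceptional classes by Rankin's trick applied to $\sum f(a)/a$ over large smooth $a$, balanced against $A^{\Omega(b)}$. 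That balancing is the real content of the theorem and is absent from your sketch.
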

We first prove that Theorem \ref{thm:shiu} applies to the functions
$d_{i,j}$ we are interested in.
\begin{lem}
For any $i,j\in\mathbb{N}$, function $d_{i,j}(n)$ which counts the
number of pairs $(a,b)\in\mathbb{N}^{2}$ for which $a^{i}b^{j}|n$
lies in the class $\mathcal{M}$.\end{lem}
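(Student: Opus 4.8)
The plan is to verify the three defining properties of the class $\mathcal{M}$ in turn, the key preliminary observation being that $d_{i,j}$ is completely determined by its values on prime powers. First I would localize the divisibility condition: writing $v_p(n)$ for the $p$-adic valuation of $n$, a pair $(a,b)$ satisfies $a^i b^j \mid n$ exactly when $i\,v_p(a) + j\,v_p(b) \le v_p(n)$ for every prime $p$. Since these constraints are independent across primes, the number of admissible pairs factors as $d_{i,j}(n) = \prod_p N(v_p(n))$, where $N(e) = \#\{(\alpha,\beta)\in\mathbb{Z}_{\ge0}^2 : i\alpha + j\beta \le e\}$. This factorization immediately gives $d_{i,j}(1)=1$ and shows that $d_{i,j}$ is in fact multiplicative, i.e. $d_{i,j}(mn)=d_{i,j}(m)\,d_{i,j}(n)$ whenever $(m,n)=1$, which is stronger than the submultiplicativity required in property (1).

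Next I would bound $d_{i,j}(p^n) = N(n)$. Because $i,j\ge 1$, any admissible pair has $\alpha \le n$ and $\beta \le n$, so trivially $N(n) \le (n+1)^2$. A direct check shows $(n+1)^2 \le 4^n$ for every $n\ge 0$, so property (2) holds with the choice $A=4$.

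For property (3) I would compare $d_{i,j}$ against the ordinary divisor function $d(n)=\sum_{e\mid n}1$. Since $d(p^n)=n+1$, the prime-power bound above reads $d_{i,j}(p^n) \le (n+1)^2 = d(p^n)^2$; as both $d_{i,j}$ and $n\mapsto d(n)^2$ are multiplicative, this yields the pointwise estimate $d_{i,j}(n) \le d(n)^2$ for all $n$. The classical bound $d(n) = O(n^{\delta})$, valid for every $\delta>0$, then gives $d_{i,j}(n) \le d(n)^2 = O(n^{2\delta})$, and taking $\delta = \varepsilon/2$ establishes property (3).

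None of the three steps presents a serious obstacle; the only point requiring genuine care is property (3), and even there the work is entirely offloaded onto the standard divisor bound via the comparison $d_{i,j}(n)\le d(n)^2$. The one remaining thing to confirm is that $d_{i,j}$ indeed takes values in $\mathbb{N}$ as demanded by the definition of $\mathcal{M}$, which is clear since the pair $(1,1)$ is always counted, whence $d_{i,j}(n)\ge 1$.
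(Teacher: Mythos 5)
Your proof is correct, and it follows the same overall skeleton as the paper's: verify the three properties of $\mathcal{M}$, bound $d_{i,j}(p^n)\le (n+1)^2\le 4^n$ for property (2), and reduce property (3) to the classical divisor bound via $d_{i,j}(n)\le d(n)^2$. The one place you genuinely diverge is property (1). The paper settles for submultiplicativity by exhibiting an explicit injection $D_{i,j}(mn)\hookrightarrow D_{i,j}(m)\times D_{i,j}(n)$ built from $(a,b)\mapsto((\gcd(a,m),\gcd(b,m)),(\gcd(a,n),\gcd(b,n)))$, which is all the class $\mathcal{M}$ demands. You instead localize at each prime, writing $d_{i,j}(n)=\prod_p N(v_p(n))$ with $N(e)=\#\{(\alpha,\beta):i\alpha+j\beta\le e\}$, and obtain exact multiplicativity. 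Your version is stronger and arguably cleaner, and it pays a small dividend in property (3), where the pointwise inequality $d_{i,j}(n)\le d(n)^2$ drops out of comparing the two multiplicative functions on prime powers; the paper gets the same inequality more directly by observing that $a$ and $b$ must each divide $n$. Your closing remark that $d_{i,j}(n)\ge 1$ (so the function is genuinely $\mathbb{N}$-valued) is a detail the paper leaves implicit. Nothing in your argument has a gap.
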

\begin{proof}
Write
\[
D_{i,j}(n)=\{(a,b)\in\mathbb{N}^{2}:a^{i}b^{j}|n\},
\]
so that $d_{i,j}(n)=|D_{i,j}(n)|$. For the submultiplicativity of
$d_{i,j}$, it suffices to exhibit an injection $\phi:D_{i,j}(mn)\rightarrow D_{i,j}(m)\times D_{i,j}(n)$,
given $(m,n)=1$. Given $(a,b)\in D_{i,j}(mn)$, we let
\[
\phi((a,b))=((\mbox{gcd}(a,m),\mbox{gcd}(b,m)),(\mbox{gcd}(a,n),\mbox{gcd}(b,n))),
\]
and submultiplicativity follows. 

Next, for any prime $p$ and positive integer $n$, it is easy to
see
\[
d_{i,j}(p^{n})\leq(n+1)^{2},
\]
since there are at most $n+1$ choices for $a$ or $b$. But $(n+1)^{2}\leq4^{n}$
for all $n\in\mathbb{N}$, so condition 2 is satisfied. 

To show condition 3, we appeal to the well-known bound $d(n)=O(n^{\varepsilon})$,
where $d(n)$ is the divisor function. But $d_{i,j}(n)\leq d(n)^{2}$
since $a,b$ are both chosen from the divisors of $n$, and the growth
condition $d_{i,j}(n)=O(n^{\varepsilon})$ for all $\varepsilon>0$
holds.
\end{proof}
With Theorem \ref{thm:shiu} in hand, it is easy to show the following
bound.
\begin{cor}
\label{cor:shiu}For $i,j\geq2$, we have
\[
S_{i,j}(x,h,D)\gg h(x),
\]
if $h(x)\gg x^{\beta}$ for some $0<\beta<\frac{1}{2}$, where the
implicit constant depends only on $\beta$ and $D$.\end{cor}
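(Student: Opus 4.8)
The plan is to bound $S_{i,j}$ from below by showing that $d_{i,j}(n)$ stays smaller than an absolute constant for a positive proportion of the integers in $(x,x+h]$, so that the summand $\exp(-Dd_{i,j}(n))$ remains bounded away from zero on that proportion. Since Theorem~\ref{thm:shiu} produces only \emph{upper} bounds, I would not apply it to $\exp(-Dd_{i,j})$ directly; instead I would apply it to $d_{i,j}$ itself (which lies in $\mathcal{M}$ by the preceding lemma) to get a first-moment estimate, and then localize with Markov's inequality.

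First I would compute $d_{i,j}(p)$ for a prime $p$. Because $i,j\ge 2$ and $p$ is not a perfect power, the only pair $(a,b)$ with $a^ib^j\mid p$ is $(1,1)$, so $d_{i,j}(p)=1$. Mertens' theorem then gives $\sum_{p\le x} d_{i,j}(p)/p = \log\log x + O(1)$, whence $\exp\big(\sum_{p\le x} d_{i,j}(p)/p\big)\asymp \log x$. Feeding this into Theorem~\ref{thm:shiu} with $f=d_{i,j}$ yields
\[
\sum_{x<n\le x+h} d_{i,j}(n) \ll \frac{h}{\log x}\cdot \log x \ll h,
\]
the factor $\log x$ cancelling exactly, with an implied constant depending only on $\beta$ (and the fixed $i,j$). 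Here the hypothesis $i,j\ge 2$ is essential: for $i=1$ the prime values $d_{i,j}(p)$ would be at least $2$ and the bound would blow up by a factor of $\log x$.

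Next I would apply Markov's inequality inside the short interval. For a threshold $T$,
\[
\#\{x<n\le x+h : d_{i,j}(n) > T\} \le \frac{1}{T}\sum_{x<n\le x+h} d_{i,j}(n) \le \frac{C h}{T}
\]
for a constant $C=C(\beta)$. Choosing $T=2C$ bounds this count by $h/2$, so at least half of the integers in $(x,x+h]$ satisfy $d_{i,j}(n)\le T$. For each such $n$ one has $\exp(-Dd_{i,j}(n))\ge e^{-DT}$, and therefore
\[
S_{i,j}(x,h,D) \ge e^{-DT}\cdot \frac{h}{2} \gg h,
\]
where the final implied constant depends on $D$ and $\beta$ (through $T$). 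The number of integers in $(x,x+h]$ is $\asymp h$ because $h\gg x^{\beta}\to\infty$, which legitimizes writing the count as a constant multiple of $h$; the finitely many small $x$ are absorbed into the constant.

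The conceptual crux, and the only real obstacle, is recognizing that the desired \emph{lower} bound follows from an \emph{upper} bound on the first moment: the Nair--Tenenbaum/Shiu inequality controls how many $n$ can have anomalously large $d_{i,j}(n)$, and all remaining $n$ then have $d_{i,j}(n)$ of bounded size. The arithmetic input that makes this work is the exact cancellation of $\log x$, which hinges on $d_{i,j}(p)=1$ and hence on the assumption $i,j\ge 2$.
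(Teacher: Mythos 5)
Your proof is correct and rests on exactly the same pillars as the paper's: apply Theorem \ref{thm:shiu} to $d_{i,j}$ itself, observe that $d_{i,j}(p)=1$ for $i,j\geq2$ so that Mertens' estimate makes the $\log x$ factors cancel, and conclude that $\sum_{x<n\leq x+h}d_{i,j}(n)\ll h$. The only difference is the final conversion step: the paper passes from this first-moment bound to the lower bound on $S_{i,j}$ in one line via Jensen's inequality and the convexity of $t\mapsto e^{-Dt}$, whereas you use Markov's inequality plus a pointwise bound on the at least half of the interval where $d_{i,j}(n)$ is bounded; the two devices are interchangeable here and yield the same dependence of the implied constant on $\beta$ and $D$.
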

\begin{proof}
We use Jensen's inequality and the convexity of the exponential function
to get
\[
S_{i,j}(x,h,D)\geq h(x)\exp\Big(-Dh(x)^{-1}\sum_{x<n\leq x+h}d_{i,j}(n)\Big).
\]

Applying Theorem \ref{thm:shiu}, we have then that
\[
S_{i,j}(x,h,D)\geq h(x)\exp\Big(-D(\log x)^{-1}\exp\Big(\sum_{p\leq x}\frac{d_{i,j}(p)}{p}\Big)\Big).
\]

Now for any prime $p$, $d_{i,j}(p)=1$, and so the inner sum is controlled
by Mertens' estimate \cite{Mertens}
\[
\sum_{p\leq x}\frac{1}{p}=\log\log x+O(1),
\]
from which the stated inequality follows.
\end{proof}
Corollary \ref{cor:shiu} will already suffice to give gaps of size
at most $O(x^{\varepsilon})$ for every $\varepsilon>0$. However,
we provide the following improvement to shorter intervals, using a
basic idea of Filaseta and Trifonov \cite{FT}. To the best of our
knowledge this bound, and the study of such sums in intervals shorter
than $x^{\varepsilon}$ for all $\varepsilon>0$ is new.
\begin{lem}
\label{lem:short}For any $\varepsilon>0$ and $i,j\geq2$, there
$E,N>0$ such that if $x>N$ then 
\[
S_{i,j}(x,h,D)\geq E\exp\Big((C_{i,j}+\varepsilon)\frac{\log x}{\log\log x}\Big),
\]
where $h(x)=\exp\Big((C_{i,j}+2\varepsilon)\frac{\log x}{\log\log x}\Big)$,
$C_{i,j}=\log2\Big(\frac{1}{i}+\frac{1}{j}\Big)$, and the constants
$E$ and $N$ depend only on the choices of $\varepsilon$ and $D$.\end{lem}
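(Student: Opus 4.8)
The plan is to bound $S_{i,j}(x,h,D)$ from below by discarding all but the integers $n$ for which $d_{i,j}(n)$ is at most an absolute constant. Since each summand lies in $(0,1]$ and is at least $e^{-DB}$ whenever $d_{i,j}(n)\le B$, we have
\[
S_{i,j}(x,h,D)\ge e^{-DB}\,N_B,\qquad N_B:=\#\{n\in(x,x+h]:d_{i,j}(n)\le B\},
\]
so it suffices to fix a constant $B=B(\varepsilon,D)$ and show $N_B\gg\exp((C_{i,j}+\varepsilon)\frac{\log x}{\log\log x})$. Writing $d_i(n)=\#\{a:a^i\mid n\}$ and $d_j(n)=\#\{b:b^j\mid n\}$, the inequality $d_{i,j}(n)\le d_i(n)d_j(n)$ (each admissible pair $(a,b)$ satisfies $a^i\mid n$ and $b^j\mid n$ separately) reduces the problem to showing that, for all but a controlled set of $n$ in the interval, neither $d_i(n)$ nor $d_j(n)$ is large. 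An integer with no large power divisor and only a bounded contribution from small primes has $d_{i,j}(n)$ bounded, so the heart of the matter is to count the $n\in(x,x+h]$ that are divisible by $a^i$, or by $b^j$, for $a$ or $b$ exceeding a threshold.

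To estimate the number of $n\in(x,x+h]$ divisible by $a^i$ for some $a$, I would split the range of $a$ dyadically. For $a$ up to $h^{1/i}$ the interval contains $h/a^i+O(1)$ multiples of $a^i$, and the contribution of these scales is given by an elementary main term together with an error $O(h^{1/i})=o(h)$. The difficulty is the range $a>h^{1/i}$, where $a^i>h$ forces \emph{at most one} multiple of $a^i$ in the interval: here the trivial count of one admissible value per $a$ runs up to $x^{1/i}$, vastly exceeding $h$. This is exactly the obstruction that the method of Filaseta and Trifonov \cite{FT} is built to overcome, and it is the only place where the sum requires genuine input.

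For the large range I would invoke the basic divided-difference idea of \cite{FT}. Fix a dyadic scale $a\in(D,2D]$ with $D>h^{1/i}$; an admissible $a$ is one for which the smooth curve $m=x/a^i$ passes within $h/a^i$ of an integer, equivalently an integer point $(a,m)$ lies in the thin region $x<ma^i\le x+h$. Taking divided differences of order $s$ of the integer sequence $m_a=\lceil x/a^i\rceil$ and comparing them with those of $x/a^i$, whose size is governed by the $s$-th derivative of order $x/D^{i+s}$, forces the admissible $a$ to be spread out unless a nonzero integer combination is anomalously small; balancing these effects bounds the number of admissible $a$ in $(D,2D]$ well below the trivial count. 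Summing the resulting estimate over dyadic scales $D$ from $h^{1/i}$ up to $x^{1/i}$, and running the identical analysis for $b^j$, produces the lower bound for $N_B$.

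I expect the constant $C_{i,j}=\log2\,(1/i+1/j)$ to emerge from optimizing this argument, with the two summands $\tfrac{\log2}{i}$ and $\tfrac{\log2}{j}$ coming from the $i$-th- and $j$-th-power conditions, the factors $1/i$ and $1/j$ from the roots governing the scale $h^{1/i}$ and the derivative sizes, and the factor $\log2$ from the binary growth $d_i(n)\ge 2^{\omega}$ of the divisor counts in the number $\omega$ of prime-power divisors, as balanced against the order $s$ of the divided differences. The two factors of $\varepsilon$ provide exactly the slack the optimization needs: the hypothesis $h=\exp((C_{i,j}+2\varepsilon)\frac{\log x}{\log\log x})$ makes the interval long enough for the divided-difference estimate to beat the trivial bound across every scale, while the conclusion retains $\exp(\varepsilon\frac{\log x}{\log\log x})$ worth of admissible $n$. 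The main obstacle, I believe, lies precisely here: unlike the classical application of \cite{FT} to gaps between power-free integers, our interval length $h$ is far below the threshold $x^{1/(2i+1)}$ at which that method guarantees even a single power-free integer, so the divided-difference estimate must be pushed into a genuinely new short-interval regime and its constant tracked exactly to yield the stated $C_{i,j}$. Once $N_B$ is bounded below, choosing $B$ in terms of $\varepsilon$ and $D$ and absorbing the factor $e^{-DB}$ supplies the required constants $E$ and $N$.
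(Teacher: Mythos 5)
There is a genuine gap, and it sits exactly where you place ``the main obstacle'': the reduction to $N_B=\#\{n\in(x,x+h]:d_{i,j}(n)\le B\}$ with $B$ an absolute constant demands far too much. Forcing $d_i(n)$ to be bounded requires excluding divisibility by $a^i$ for every $a$ up to $x^{1/i}$, and for $h=\exp\big((C_{i,j}+2\varepsilon)\frac{\log x}{\log\log x}\big)=x^{o(1)}$ this is essentially the gaps-between-$i$-th-power-free-numbers problem. The Filaseta--Trifonov divided-difference machinery reaches interval lengths of order $x^{1/(2i+1)}$, and $x^{\varepsilon}$ is known only under the $abc$ conjecture (Granville); nothing in your sketch supplies the genuinely new idea needed to push it to $x^{o(1)}$, and no such technique is currently known. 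As written, the plan cannot be completed, and your proposed origin of the constant $C_{i,j}$ (an optimization inside a divided-difference estimate) is not where it actually comes from.

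The paper's proof sidesteps all of this by never asking $d_{i,j}(n)$ to be bounded. It removes only the $n\in(x,x+h]$ divisible by $p^i$ for some \emph{prime} $p\le h$; this is an elementary count, $\sum_{p\le h}\big(\frac{h}{p^i}+1\big)\le(\zeta(i)-1)h+o(h)$, which discards at most a constant fraction of the interval since $\zeta(i)-1<1$ for $i\ge2$. Every surviving $n$ has all of its repeated prime factors (those with exponent $\ge i$) larger than $h$, so their total exponent is at most $\frac{\log n}{\log h}=(1+o(1))\frac{\log\log x}{C_{i,j}+2\varepsilon}$, giving $d_i(n)\le 2^{\log n/(i\log h)}$ and hence $d_{i,j}(n)\le d_i(n)d_j(n)\le(\log x)^F$ with $F=\frac{\log 2}{C_{i,j}+\varepsilon}\big(\frac{1}{i}+\frac{1}{j}\big)<1$. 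Each surviving summand is therefore at least $\exp(-D(\log x)^F)$, and since $F<1$ the exponent $D(\log x)^F$ is $o\big(\frac{\log x}{\log\log x}\big)$, so $S_{i,j}(x,h,D)\gg h\exp(-D(\log x)^F)\ge\exp\big((C_{i,j}+\varepsilon)\frac{\log x}{\log\log x}\big)$ for large $x$. The constant $C_{i,j}=\log2\big(\frac{1}{i}+\frac{1}{j}\big)$ is precisely the threshold that makes $F<1$, i.e.\ that makes the unavoidable divisor counts of the surviving integers subdominant; the only idea borrowed from Filaseta and Trifonov is the elementary small-prime sieve, not the divided differences.
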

\begin{proof}
First, we have that $d_{i,j}(n)\leq d_{i}(n)d_{j}(n)$. Without loss
of generality assume $i\leq j$. Henceforth $p$ always refers to
a prime.

We enumerate the set $A$ of $n\in(x,x+h]$ divisible by some $i$-th
prime power $p^{i}$ where $p\leq h$. Because $i\geq2$, we have
that
\[
|A|\leq\sum_{p\leq h}\Big(\frac{h}{p^{i}}+1\Big)\leq(\zeta(i)-1)h+o(h),
\]
by the Chebyshev bound on the prime counting function, where $\zeta$
is the Riemann zeta function. Let $A^{c}$ denote the complement of
$A$ in $(x,x+h]$. It follows that since $\zeta(i)<2$ uniformly
in $i>2$, there exists a positive constant $0<B<1$ for which, whenever
$x$ is sufficiently large, $|A^{c}|\geq Bh$. We restrict our attention
to only $n\in A^{c}$.

Each such $n$ we can write as $n=p_{1}^{\alpha_{1}}p_{2}^{\alpha_{2}}\cdots p_{\ell}^{\alpha_{\ell}}m$
where each $p_{k}>h$, each $\alpha_{k}\geq i$, and $m$ is $i$-th
power free, so that 
\[
d_{i}(n)=\prod_{k=1}^{\ell}\Big(\Big\lfloor\frac{\alpha_{k}}{i}\Big\rfloor+1\Big).
\]

By a simple smoothing argument and the fact that if $n\in A^{c}$,
\[
\sum_{k=1}^{\ell}\alpha_{k}\leq\frac{\log n}{\log h}=\frac{(1+o(1))\log\log x}{C_{i,j}+2\varepsilon},
\]
we find that for any $n\in A^{c}$, and $x$ sufficiently large,
\[
d_{i}(n)\leq\exp\Big(\frac{\log2}{i}\frac{\log\log x}{C_{i,j}+\varepsilon}\Big),
\]
and similarly,
\[
d_{j}(n)\leq\exp\Big(\frac{\log2}{j}\frac{\log\log x}{C_{i,j}+\varepsilon}\Big).
\]

Combining these two inequalities, we find
\begin{eqnarray*}
S_{i,j}(x,h,D) & \geq & \sum_{n\in A^{c}}\exp\Big(-Dd_{i}(n)d_{j}(n)\Big)\\
 & \geq & Eh\exp\Big(-D(\log x)^{F}\Big),
\end{eqnarray*}
where 
\[
F=\frac{\log2}{C_{i,j}+\varepsilon}\Big(\frac{1}{i}+\frac{1}{j}\Big)<1.
\]
Plugging in the expression for $h$ the result follows, since
\[
\frac{(C_{i,j}+2\varepsilon)\log x}{\log\log x}-D(\log x)^{F}\geq\frac{(C_{i,j}+\varepsilon)\log x}{\log\log x}
\]
for $x$ sufficiently large in terms of $D$ and $\varepsilon$, since
$F<1$.
\end{proof}

\section*{The GP-Free Process}

For the proof of Theorem \ref{thm:main}, we randomly generate a $6$-GP-free
sequence by removing at least one element from each $6$-GP. For the
technical details to work out, we only remove one of the middle two
elements of each $6$-GP.

Let $G_{k}$ denote the family of all nontrivial $k$-term geometric
progressions of positive integers. Since each such family is countable,
we can enumerate $G_{k}$
\[
G{}_{k}=(G_{k,1},G_{k,2},\ldots),
\]
where each $G_{k,i}$ is a nontrivial $k$-GP and they are ordered
lexicographically as $k$-tuples of positive integers. To avoid double-counting
we assume that each $G_{k,i}$ has a common ratio $r_{k,i}>1$.
\begin{defn}
\label{6-gp}The \emph{$6$-GP-free process} randomly generates a
$6$-GP-free sequence $T$ as follows. Generate a sequence $U=(u_{1},u_{2},\ldots)$
where $u_{i}\in G_{6,i}$ such that if
\[
G_{6,i}=(a_{i}b_{i}^{5},a_{i}b_{i}^{4}c_{i},a_{i}b_{i}^{3}c_{i}^{2},a_{i}b_{i}^{2}c_{i}^{3},a_{i}b_{i}c_{i}^{4},a_{i}c_{i}^{5}),
\]
where $b_{i}<c_{i}$, then $u_{i}=a_{i}b_{i}^{3}c_{i}^{2}$ or $u_{i}=a_{i}b_{i}^{2}c_{i}^{3}$
with equal probability $\frac{1}{2}$. All of the $u_{i}$'s are chosen
independently. Let $T$ be the random variable whose value is the
sequence of all positive integers never appearing in $U$, sorted
in increasing order.
\end{defn}
It is clear that $T$ misses at least one term in each progression
in $G_{6}$. In the next section we show that with nonzero probability
$T$ has gaps smaller than $O\Big(\exp\Big((C_{2,3}+\varepsilon)\frac{\log x}{\log\log x}\Big)\Big)$
for every $\varepsilon>0$, thereby proving Theorem \ref{thm:main}.
We rely extensively on Lemma \ref{lem:short}.

\section*{Proof of the Main Theorem}

Using Definition \ref{6-gp} it suffices to prove the following lemma
to prove Theorem \ref{thm:main}.
\begin{lem}
\label{lem:main}For any $\varepsilon>0$, there exists a constant
$E>0$ such that for every constant $C>0$ and every $x\in\mathbb{N}$
sufficiently large, the probability that the sequence $T$ generated
from the $6$-GP-free process does not contain any element of $(x,x+Ch]$
is at most\textup{
\[
\mbox{P}[T\cap(x,x+Ch]=\emptyset]\leq\exp\Big(-CE\exp\Big((C_{2,3}+\varepsilon)\frac{\log x}{\log\log x}\Big)\Big),
\]
}\textup{\emph{where $C_{i,j}$, $h=h(x)$, and $E$ are as in Lemma
\ref{lem:short}, for the same $\varepsilon$ and $D=\log2$.}}\end{lem}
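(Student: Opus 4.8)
The plan is to bound the probability of the "bad event" $T \cap (x, x+Ch] = \emptyset$ by controlling which elements in the interval can be deleted by the process and showing that enough survive with overwhelming probability. The key observation is that an integer $n \in (x, x+Ch]$ is removed from $T$ only if it equals one of the chosen middle terms $u_i = a_i b_i^3 c_i^2$ or $a_i b_i^2 c_i^3$ of some $6$-GP $G_{6,i}$. So $n$ can be deleted only if it is \emph{expressible} as a middle term of a $6$-GP, which forces $n = a b^3 c^2$ (or $a b^2 c^3$) with $b < c$. The crucial point is that such a representation requires $n$ to be divisible by a nontrivial product of a square and a cube: reading $a b^3 c^2$, we see $b^3 c^2 \mid n$ with $b,c \geq 1$ and $b < c$ (so $c \geq 2$), which means $n$ carries a $d_{2,3}$-type divisor structure. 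I would make this precise by arguing that the number of distinct GPs whose middle term equals a fixed $n$ — equivalently, the number of ways $n$ can be deleted — is controlled by $d_{2,3}(n)$, the count of pairs $(a,b)$ with $a^2 b^3 \mid n$.

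\textbf{First} I would identify the "safe" integers: those $n \in (x, x+Ch]$ for which $d_{2,3}(n)$ is small are deleted only by few progressions, and I want to show that with high probability at least one survives all its deletion opportunities. For a fixed $n$ expressible as a middle term in $m$ distinct ways (where $m \ll d_{2,3}(n)$ or a comparable count), each corresponding $u_i$ independently chooses to delete $n$ or its partner with probability $\tfrac12$; hence $n$ survives all of them with probability at least $2^{-m} \geq 2^{-O(d_{2,3}(n))}$. \textbf{Next}, since the choices across different progressions are independent (by Definition \ref{6-gp}), the event that \emph{every} $n$ in the interval is deleted is contained in the intersection over $n$ of the individual deletion events, but the cleaner route is: the probability that a \emph{particular} $n$ survives is at least $\exp(-D\, d_{2,3}(n))$ for a suitable constant $D$ tied to $\log 2$, and the probability that the whole interval is emptied is at most the product over $n$ of the probabilities that $n$ is deleted. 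Bounding $\mathrm{P}[T \cap (x,x+Ch] = \emptyset]$ by $\prod_n (1 - \exp(-D\,d_{2,3}(n)))$ and using $1 - t \leq e^{-t}$, I obtain
\[
\mathrm{P}[T \cap (x,x+Ch] = \emptyset] \leq \exp\Big(-\sum_{x < n \leq x+Ch} \exp(-D\, d_{2,3}(n))\Big).
\]

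\textbf{Then} I recognize the exponent as exactly $-S_{2,3}(x, Ch, D)$, the short divisor-function sum studied in the previous section, with $D = \log 2$ as specified. Applying Lemma \ref{lem:short} with $(i,j) = (2,3)$, so that $C_{2,3} = \log 2 \big(\tfrac12 + \tfrac13\big) = \tfrac{5}{6}\log 2$, gives the lower bound $S_{2,3}(x, Ch, D) \geq C E \exp\big((C_{2,3} + \varepsilon)\tfrac{\log x}{\log\log x}\big)$, where the factor $C$ arises from summing over an interval of length $Ch$ rather than $h$ (the sum is proportional to interval length in the regime of Lemma \ref{lem:short}). Substituting yields precisely the claimed bound.

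\textbf{The main obstacle} will be making the independence and counting argument airtight: I must verify that for a fixed $n$, the various $6$-GPs $G_{6,i}$ having $n$ as their chosen middle term are genuinely distinct indices $i$ (so their coin flips are independent), and that the number of such progressions is bounded by (a constant times) $d_{2,3}(n)$ rather than something larger — the subtlety is that $n = a b^3 c^2$ might coincide with $n = a' b'^2 c'^3$ for different GPs, and I must ensure no single surviving integer is "double-counted" in a way that breaks the product bound. Establishing that the survival probability of $n$ is bounded below by $\exp(-D\, d_{2,3}(n))$ with the correct constant $D = \log 2$ — reconciling the base-$2$ of the coin flips with the $\exp$ normalization of $S_{2,3}$ — is where the factor of $\log 2$ must be threaded through carefully. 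Once that reduction to $S_{2,3}$ is clean, the rest is a direct invocation of Lemma \ref{lem:short}.
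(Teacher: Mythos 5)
Your overall skeleton matches the paper's: bound the survival probability of each $n$ below by $2^{-d_{3,2}(n)}=\exp(-d_{3,2}(n)\log 2)$, pass to a product over the interval, apply $1-t\leq e^{-t}$, and invoke Lemma \ref{lem:short} with $D=\log 2$. But there is a genuine gap at the step you call ``the cleaner route'': you assert that the probability the whole interval is emptied is at most the product over $n$ of the individual deletion probabilities. The event that every $n\in(x,x+Ch]$ is deleted is the \emph{intersection} of the events $\{n\in U\}$, and $\mathrm{P}[\bigcap_n A_n]\leq\prod_n \mathrm{P}[A_n]$ is false in general without independence (or at least negative dependence) of the $A_n$ across \emph{different} $n$ in the interval. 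Your ``main obstacle'' paragraph only discusses independence of the coin flips affecting a \emph{single} $n$ (which is automatic from Definition \ref{6-gp}) and the counting of progressions through one $n$; it never addresses the cross-element dependence, which is the actual issue: two distinct $n_1,n_2$ in the interval could a priori be the two middle terms of the \emph{same} progression $G_{6,i}$, coupling their deletion events through one coin flip.

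The paper closes this gap with the observation that motivates restricting deletions to the middle two terms in the first place: for any $G_{6,i}$ the middle terms $a_ib_i^3c_i^2$ and $a_ib_i^2c_i^3$ differ by at least $a_ib_i^2c_i^2\geq\sqrt{n}$, so once $h(x)<\sqrt{x}$ no two elements of $(x,x+Ch]$ lie in the same progression. Hence the deletion events for distinct $n$ in the interval depend on disjoint sets of independent coin flips and are genuinely independent, which is what licenses the product bound. Without this (or some substitute such as a proved negative-association inequality), your derivation of $\prod_n(1-\exp(-d_{3,2}(n)\log 2))$ is unjustified. The remainder of your argument --- the reduction to $S_{2,3}(x,Ch,\log 2)$, the factor $C$ from splitting the longer interval, and the invocation of Lemma \ref{lem:short} with $C_{2,3}=\tfrac{5}{6}\log 2$ --- agrees with the paper.
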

\begin{proof}
Fix $C>0$. The sequence $T$ contains no element of $(x,x+Ch]$ exactly
when the sequence $U$ in Definition \ref{6-gp} contains every element
of $(x,x+Ch]$. For any given positive integer $n\in\mathbb{N}$,
the probability that $n\in U$ can be controlled as follows. It is
easy to see that $n$ appears as the term $a_{i}b_{i}^{3}c_{i}^{2}$
or $a_{i}b_{i}^{2}c_{i}^{3}$ in some $G_{6,i}$ for at most $d_{3,2}(n)$
choices of $i$.

Furthermore, since the choices of $u_{i}\in G_{6,i}$ are independent,
it follows that
\[
\mbox{P}[n\not\in U]\geq\Big(\frac{1}{2}\Big)^{d_{3,2}(n)}.
\]

We can now bound the probability that every $ $$n\in(x,x+Ch]$ appears
in $U$. Given any fixed 
\[
G_{6,i}=(a_{i}b_{i}^{5},a_{i}b_{i}^{4}c_{i},a_{i}b_{i}^{3}c_{i}^{2},a_{i}b_{i}^{2}c_{i}^{3},a_{i}b_{i}c_{i}^{4},a_{i}c_{i}^{5}),
\]
note that the middle two elements $a_{i}b_{i}^{3}c_{i}^{2},a_{i}b_{i}^{2}c_{i}^{3}$
differ by at least $a_{i}b_{i}^{2}c_{i}^{2}$. Thus, if $n=a_{i}b_{i}^{3}c_{i}^{2}$
lies in $(x,x+Ch]$, then 
\[
|a_{i}b_{i}^{2}c_{i}^{3}-n|\geq a_{i}b_{i}^{2}c_{i}^{2}\geq\sqrt{n},
\]
and the same holds if $n=a_{i}b_{i}^{2}c_{i}^{3}$. For $x$ sufficiently
large we have $h(x)<\sqrt{x}$, so no two elements in $(x,x+Ch]$
lie in the same $G_{6,i}$ together. Thus each of the events $n\not\in U$
are independent. It follows that we can multiply probabilities, getting
\[
\mbox{P}[T\cap(x,x+Ch]=\emptyset]\leq\prod_{n\in(x,x+Ch]}\Big(1-\exp(-d_{3,2}(n)\log2)\Big),
\]
to which we can apply $1-t\leq e^{-t}$ to find that
\[
\mbox{P}[T\cap(x,x+Ch]=\emptyset]\leq\exp\Big(-\sum_{(x,x+Ch]}\exp(-d_{3,2}(n)\log2)\Big).
\]

Now the inner sum can be bounded by Lemma \ref{lem:short} with $D=\log2$,
from which we get the desired inequality
\[
\mbox{P}[T\cap(x,x+Ch]=\emptyset]\leq\exp\Big(-CE\exp\Big((C_{2,3}+\varepsilon)\frac{\log x}{\log\log x}\Big)\Big)
\]
for some constant $E>0$ depending only on the choice of $\varepsilon$,
for sufficiently large $x$.
\end{proof}
The proof of Theorem \ref{thm:main} is now dependent on choosing
$C$ large enough in Lemma \ref{lem:main}.
\begin{proof}
(Theorem \ref{thm:main}) For any fixed $\varepsilon>0$, we can pick
$C_{1}>0$ such that
\[
\sum_{x\in\mathbb{N}}\exp\Big(-C_{1}E\exp\Big((C_{2,3}+\varepsilon)\frac{\log x}{\log\log x}\Big)\Big)<1.
\]

In particular, by Lemma \ref{lem:main} and linearity of expectation,
we see that there exists an $N>0$ for which the expected number of
intervals $(x,x+C_{1}h]$ with $x>N$ that fail to intersect $T$
is less than 1. Thus, there exists a $T$ which intersects every $(x,x+C_{1}h]$
for all $x>N$. Taking $C$ sufficiently large so that for every $x\in\mathbb{N}$,
$(x,x+Ch(x)]$ contains an $(x',x'+C_{1}h(x')]$ for some $x'>N$,
we see that this $T$ intersects $(x,x+Ch]$ for all $x\in\mathbb{N}$.
Thus no gap can be of size greater than $Ch(x),$ as desired. The
value of $C_{2,3}$ is exactly $\frac{5}{6}\log2$, as claimed.
\end{proof}

\section*{Shorter Geometric Progressions}

In this section we study the case when $k<6$, and construct probabilistically
a sequence $T$ with gaps $O(x^{\varepsilon})$ which avoids $5$-term
geometric progressions. We also find such a $T$ avoiding $3$-term
geometric progressions with integer ratios. The proofs are very similar
to that of Theorem \ref{thm:main}, the only difference being we can
now apply Theorem \ref{thm:shiu} in place of our Lemma \ref{lem:short}.
\begin{prop}
\label{5-gp}There exists a $5$-GP-free sequence $T$ of positive
integers $\{t_{i}\}_{i\in\mathbb{N}}$ such that for every $\varepsilon>0$,
there exists a constant $C_{\varepsilon}>0$ satisfying

\[
t_{i+1}-t_{i}<C_{\varepsilon}t_{i}^{\varepsilon}
\]
for all $i\in\mathbb{N}$.\end{prop}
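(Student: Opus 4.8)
The plan is to follow the proof of Theorem \ref{thm:main} almost verbatim, replacing the $6$-GP-free process of Definition \ref{6-gp} by an analogous $5$-GP-free process and replacing Lemma \ref{lem:short} by the coarser Corollary \ref{cor:shiu}. Writing a nontrivial $5$-GP with ratio $c_i/b_i>1$ as
\[
G_{5,i}=(a_ib_i^4,\,a_ib_i^3c_i,\,a_ib_i^2c_i^2,\,a_ib_ic_i^3,\,a_ic_i^4),
\]
its single middle term $a_ib_i^2c_i^2=a_i(b_ic_i)^2$ has exponent pattern $(2,2)$ and, given a representation, uniquely determines the progression. The process removes this middle term (randomising as in Definition \ref{6-gp}, a point I return to below), so that the number of progressions in which a fixed $n$ can be removed is at most $d_{2,2}(n)$. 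Since each progression has a unique middle term, distinct integers of $(x,x+Ch]$ are middles of disjoint families of progressions, so the survival events are independent; mirroring Lemma \ref{lem:main} with $D=\log 2$ then gives
\[
\mathrm{P}[T\cap(x,x+Ch]=\emptyset]\le\exp\Big(-\!\!\sum_{x<n\le x+Ch}\exp(-d_{2,2}(n)\log 2)\Big)=\exp\big(-S_{2,2}(x,Ch,\log 2)\big).
\]

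The one genuine difference from Theorem \ref{thm:main} enters here. Since we only aim for gaps $O(x^\varepsilon)$, I would fix $0<\beta<\tfrac12$ and take $h(x)=x^\beta$, which satisfies the hypothesis $h\gg x^\beta$ of Corollary \ref{cor:shiu}. Because both indices equal $2\ge 2$, Corollary \ref{cor:shiu} applies directly and yields $S_{2,2}(x,Ch,\log 2)\gg Ch\gg x^\beta$, with implied constant depending only on $\beta$ and $\log 2$; no appeal to the finer Lemma \ref{lem:short} is needed, which is precisely why the $5$-GP bound is polynomial rather than of the shape $\exp(C\log x/\log\log x)$. Consequently the failure probability is at most $\exp(-\Omega(Cx^\beta))$, which is summable over $x\in\mathbb{N}$ once $C$ is taken large. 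A first-moment argument identical to the proof of Theorem \ref{thm:main} then produces, with positive probability, a $5$-GP-free $T$ meeting every interval $(x,x+Cx^\beta]$ for all large $x$; enlarging $C$ to absorb the finitely many small $x$ gives $t_{i+1}-t_i<C_\varepsilon t_i^\beta$ for all $i$. Taking $\beta=\varepsilon$, and noting $\beta$ ranges over all of $(0,\tfrac12)$, establishes the claim for every $\varepsilon>0$.

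The hard part will be the setup of the random removal rather than the analytic estimate, and it is exactly where the odd length $5$ costs us relative to the even length $6$. In Definition \ref{6-gp} the two middle terms $a_ib_i^3c_i^2$ and $a_ib_i^2c_i^3$ let one destroy each progression \emph{with certainty} while randomising which term is deleted, so every $n$ nonetheless survives with probability $\ge(\tfrac12)^{d_{3,2}(n)}$. A $5$-GP has a single term of pattern $(2,2)$, so no such coin flip between two equivalent middle terms is available; the flanking terms $a_ib_i^3c_i$ and $a_ib_ic_i^3$ carry pattern $(3,1)$, and since $d_{3,1}(p)=2$ the Nair--Tenenbaum bound of Theorem \ref{thm:shiu} gives only $\sum_{x<n\le x+h}d_{3,1}(n)\ll h\log x$, far too weak for Corollary \ref{cor:shiu} to return $\gg h$. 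The crux is therefore to design a removal rule that simultaneously destroys every $5$-GP and keeps each integer's survival probability governed by $d_{2,2}$ alone, never by the larger $d_{3,1}$; once this is arranged, the remainder of the argument transcribes directly from the proof of Theorem \ref{thm:main}.
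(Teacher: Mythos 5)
Your analytic half is fine and you have correctly located the crux, but you have not crossed it: the write-up stops exactly where the real work begins. A $5$-GP has only one term of exponent pattern $(2,2)$, so ``remove the middle term'' is a deterministic rule, and the resulting $T$ is precisely the set of squarefree integers: every non-squarefree $n$ is the middle term $ab^{2}c^{2}$ of a nontrivial $5$-GP (take $b=1$ and any $c\geq2$ with $c^{2}\mid n$), while no squarefree $n$ is. The gaps between squarefree integers are only known to be $O(x^{1/5}\log x)$, so nothing like $O(x^{\varepsilon})$ follows. Consequently your displayed bound $\mathrm{P}[T\cap(x,x+Ch]=\emptyset]\le\exp(-S_{2,2}(x,Ch,\log2))$ rests on a removal rule you explicitly acknowledge you have not constructed, and your closing desideratum --- that each integer's survival probability be governed by $d_{2,2}$ alone and ``never by the larger $d_{3,1}$'' --- actually points away from the solution.

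The paper's fix is a biased coin rather than a fair one. From the $n$-th progression it removes either the $(3,1)$-term $a_{n}b_{n}^{3}c_{n}$ or the $(2,2)$-term $a_{n}b_{n}^{2}c_{n}^{2}$, with probabilities $1-p$ and $p$ respectively, where $p=p(a_{n}b_{n}^{2}c_{n}^{2})$ and $p(x)=1-1/\log(x+2)$. Then $P_{x}\geq p(x)^{d_{3,1}(x)}(1-p(x))^{d_{2,2}(x)}$: the $d_{3,1}$ exponent does appear, but since $\log p(x)\sim-1/\log x$ while $h^{-1}\sum_{x<n\leq x+h}d_{3,1}(n)\ll\log x$ by Theorem \ref{thm:shiu}, Jensen's inequality gives
\[
S\;\gg\;h\exp\Big(\log p(x)\log x+\log(1-p(x))\Big)\;\gg\;h/(\log x)^{E},
\]
where the $(\log x)^{-E}$ loss comes from the factor $(1-p)^{d_{2,2}}$ with $1-p\asymp1/\log x$. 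That polylogarithmic loss is harmless against $h=Cx^{\varepsilon}$, and from there your first-moment argument goes through as you describe (note the paper invokes Theorem \ref{thm:shiu} on both divisor sums rather than Corollary \ref{cor:shiu}). So the missing idea is: one may let the badly distributed function $d_{3,1}$ enter the survival probability, provided the removal probability attached to that term is taken small enough, of order $1/\log x$, that its average multiplicity of order $\log x$ costs only a constant factor.
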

\begin{proof}
Enumerate the nontrivial $5$-GP's of positive integers, and let the
$n$-th one be
\[
(a_{n}b_{n}^{4},a_{n}b_{n}^{3}c_{n},a_{n}b_{n}^{2}c_{n}^{2},a_{n}b_{n}c_{n}^{3},a_{n}c_{n}^{4})
\]
with ratio $c_{n}/b_{n}>1$. We construct $T$ by removing from $\mathbb{N}$,
randomly and independently, one of the two terms $a_{n}b_{n}^{3}c_{n}$
and $a_{n}b_{n}^{2}c_{n}^{2}$ from each such geometric progression,
with probabilities $1-p,p$ where $p=p(a_{n}b_{n}^{2}c_{n}^{2})$
depends only on the middle term of the progression, and is a nondecreasing
function $p:\mathbb{N}\rightarrow(0,1)$ we will choose to be 
\[
p(x)=1-\frac{1}{\log(x+2)}.
\]

For a given $x\in\mathbb{N}$, we compute the probability $P_{x}$
that $x$ is not removed in this process. By definition, $x$ is the
second term $a_{n}b_{n}^{3}c_{n}$ in at most $d_{3,1}(x)$ distinct
progressions, and the third term $a_{n}b_{n}^{2}c_{n}^{2}$ in at
most $d_{2,2}(x)$ distinct progressions. Because $c_{n}>b_{n}$,
we have
\[
a_{n}b_{n}^{2}c_{n}^{2}\geq a_{n}b_{n}^{3}c_{n},
\]
and so
\[
P_{x}\geq p(x)^{d_{3,1}(x)}(1-p(x))^{d_{2,2}(x)},
\]
by the independence of all of these events. To finish the proof, pick
$h(x)=Cx^{\varepsilon}$ to be the interval length. Then, by independence
we get
\[
\mbox{P}[T\cap(x,x+h(x)]=\emptyset]\leq\prod_{i=1}^{h(x)}\Big(1-p(x+i)^{d_{3,1}(x+i)}(1-p(x+i))^{d_{2,2}(x+i)}\Big),
\]
and applying $1-t\leq e^{-t}$,
\[
\mbox{P}[T\cap(x,x+h(x)]=\emptyset]\leq e^{-S},
\]
where
\[
S=\sum_{i=1}^{h(x)}p(x+i)^{d_{3,1}(x+i)}(1-p(x+i)^{d_{2,2}(x+i)}).
\]

Applying Jensen's inequality, we can further bound $S$ by
\begin{eqnarray*}
S & \geq & h\exp\Big(\frac{1}{h}\sum_{i=1}^{h(x)}d_{3,1}(x+i)\log p(x+i)+d_{2,2}(x+i)\log(1-p(x+i))\Big)\\
 & \gg & h\exp\Big(\log p(x)\log x+\log(1-p(x))\Big),
\end{eqnarray*}
where we applied Theorem \ref{thm:shiu} on the two sums
\[
\sum_{i=1}^{h(x)}d_{3,1}(x+i)\ll h(x)\log x,
\]
since $d_{3,1}(p)=2$ for all primes $p$, and
\[
\sum_{i=1}^{h(x)}d_{2,2}(x+i)\ll h(x)
\]
since $d_{2,2}(p)=1$ for all primes $p$. We also used the fact that
$p$ is essentially constant on intervals of length $h$. By choosing
$p(x)\sim1-\frac{1}{\log x}$ in the above, we find that
\[
\log p(x)\log x+\log(1-p(x))=O(\log\log x)
\]
and so we have shown $S\gg h(x)/(\log x)^{E}\gg x^{\varepsilon/2}$
for some constant $E$, whence we can choose $C$ sufficiently large
in $h(x)=Cx^{\varepsilon}$ such that the sum $\sum e^{-S}<1$, proving
the existence of a $T$ not missing any elements in intervals $(x,x+h(x)]$.
\end{proof}
If we further restrict to avoiding only geometric progressions with
integer ratios, then we have by a similar argument the same bound
for $k=3$.
\begin{prop}
There exists a sequence $T$ of positive integers $\{t_{i}\}_{i\in\mathbb{N}}$,
avoiding all $3$-GP's with integer ratios, such that for every $\varepsilon>0$,
there exists a constant $C_{\varepsilon}>0$,
\[
t_{i+1}-t_{i}<C_{\varepsilon}t_{i}^{\varepsilon}
\]
for all $i\in\mathbb{N}$.\end{prop}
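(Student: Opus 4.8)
The plan is to mimic the proof of Proposition~\ref{5-gp} almost verbatim, replacing the two divisor functions by their natural analogues for length-three progressions. Every $3$-GP with integer ratio can be written uniquely as $(m,mr,mr^{2})$ with $m\geq1$ and an integer $r\geq2$, and removing \emph{either} of the last two terms $mr$ or $mr^{2}$ already destroys the progression. I would therefore build $T$ by deleting from $\mathbb{N}$, independently for each such progression, one of the terms $mr$ (with probability $1-p$) or $mr^{2}$ (with probability $p$), where $p=p(mr^{2})$ depends only on the largest term and $p(x)=1-1/\log(x+2)$ is nondecreasing, exactly as before.

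The one genuine difference from the five-term case — and the step I expect to be the main obstacle — is that one must never delete the \emph{first} term $m$: a fixed integer $x$ occurs as the first term $m$ of infinitely many progressions $(x,xr,xr^{2})$, one for each $r\geq2$, so there is no way to attach an independent coin to that position. Restricting deletions to the two larger terms resolves this, because a given $x$ is the middle term $mr$ for at most $d(x)$ progressions (one for each divisor $r\geq2$) and the last term $mr^{2}$ for at most $d_{2}(x)$ progressions (one for each $r\geq2$ with $r^{2}\mid x$). Both $d$ and $d_{2}$ lie in the class $\mathcal{M}$ by a routine check analogous to the first lemma, with $d(p)=2$ and $d_{2}(p)=1$ on primes — precisely the values of $d_{3,1}(p)$ and $d_{2,2}(p)$ used earlier — so Theorem~\ref{thm:shiu} yields the identical estimates $\sum_{i=1}^{h}d(x+i)\ll h\log x$ and $\sum_{i=1}^{h}d_{2}(x+i)\ll h$.

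With this in place the remaining computation is copied from Proposition~\ref{5-gp}. Since $mr^{2}\geq mr$ and $p$ is nondecreasing, the probability that a given $x$ survives is $P_{x}\geq p(x)^{d(x)}(1-p(x))^{d_{2}(x)}$. Before multiplying survival probabilities across a short interval I would verify independence exactly as in Lemma~\ref{lem:main}: if $x+i$ and $x+j$ (with $i\neq j$) both lie, as middle or last terms, in one common progression, then $\{x+i,x+j\}=\{mr,mr^{2}\}$, forcing their ratio to be the integer $r\geq2$; but both lie in $(x,x+h]$ with $h=Cx^{\varepsilon}=o(x)$, so their ratio tends to $1$, a contradiction for large $x$. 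Hence the deletion events for distinct elements of $(x,x+h]$ depend on disjoint families of coins and are independent, giving
\[
\mbox{P}[T\cap(x,x+h]=\emptyset]\leq\prod_{i=1}^{h}\big(1-P_{x+i}\big)\leq e^{-S},\qquad S=\sum_{i=1}^{h}P_{x+i}.
\]
Applying Jensen's inequality and the two Shiu estimates, together with the fact that $p$ is essentially constant on an interval of length $h$, reduces the exponent to $\log p(x)\log x+\log(1-p(x))=O(\log\log x)$, whence $S\gg h/(\log x)^{E}\gg x^{\varepsilon/2}$. Taking $h(x)=Cx^{\varepsilon}$ with $C$ large enough that $\sum_{x}e^{-S}<1$ and arguing as in the proof of Theorem~\ref{thm:main} produces a $T$ meeting every interval $(x,x+Cx^{\varepsilon}]$, which is the desired gap bound.
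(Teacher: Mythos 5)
Your proposal matches the paper's own proof essentially verbatim: the same deletion scheme on the two larger terms with $p(x)=1-1/\log(x+2)$, the same bound $P_{x}\geq p(x)^{d(x)}(1-p(x))^{d_{2}(x)}$, and the same Jensen-plus-Shiu computation giving $S\gg h/(\log x)^{E}$. Your explicit independence check (two elements of a short interval cannot be the two deletable terms of a common progression since the ratio would be an integer $\geq2$) is a detail the paper leaves implicit, but it is the intended argument.
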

\begin{proof}
Enumerate the integer-ratio $3$-GP's of positive integers, such that
the $n$-th one is
\[
(a_{n},a_{n}r_{n},a_{n}r_{n}^{2})
\]
with integer ratio $r_{n}\geq2$. Construct $T$ by removing $a_{n}r_{n}$
or $a_{n}r_{n}^{2}$ from each such progression, randomly and independently,
with probabilities $1-p,p$ respectively, where $p=p(a_{n}r_{n}^{2})$
is chosen as the nondecreasing function 
\[
p(x)=1-\frac{1}{\log(x+2)}.
\]

Then if $P_{x}$ is the probability a given element $x\in\mathbb{N}$
remains in $T$, it is bounded by
\[
P_{x}\geq p(x)^{d(x)}(1-p(x))^{d_{2}(x)}.
\]

Following the exact same computation as in the proof of Proposition
\ref{5-gp}, we get
\[
\mbox{P}[T\cap(x+h(x)]=\emptyset]\leq e^{-S},
\]
and by two applications of Theorem \ref{thm:shiu}, $S$ is bounded
by
\begin{eqnarray*}
S & \gg & h\exp\Big(\log p(x)\log x+\log(1-p(x))\Big)\\
 & \gg & h/\log(x)^{E}
\end{eqnarray*}
for some constant $E>0$. As $h(x)=Cx^{\varepsilon},$ this shows
that $S\gg x^{\varepsilon/2}$ whereby we can pick $C$ sufficiently
large that $\sum e^{-S}<1$, proving the existence of a $T$ intersecting
all $(x,x+h(x)]$.
\end{proof}
Unfortunately, substantial improvement via shortening the intervals
in the bound of Nair and Tenenbaum seems unlikely, due to the fact
that individual values of $d_{i,j}(n)$ can grow quite large. In particular,
it is a classical theorem of Wigert that
\[
\limsup_{n\rightarrow\infty}\frac{\log d(n)}{\log n/\log\log n}=\log2,
\]
and the upper behavior of general $d_{i,j}$ is similar. Proceeding
directly by convexity and the bound of Nair and Tenenbaum, one cannot
hope to do better than Theorem \ref{thm:main}, except possibly by
improving the constant.

\section*{Closing Remarks and Open Questions}

The large gaps problems for primes and squarefree integers remain
poorly understood. Whereas the largest gaps in both sequences are
expected to be $O((\log x)^{2})$, and certainly $O(x^{\varepsilon})$
for every $\varepsilon>0$, the best that has been shown is $O(x^{0.525})$
due to Baker, Harman, and Pintz \cite{BHP} for the primes and $O(x^{\frac{1}{5}}\log x)$
by Filaseta and Trifonov \cite{FT} for the squarefree integers. In
light of these difficulties, a solution of the short gaps problem
for GP-free sequences should sidestep our lack of understanding of
the distribution of primes and squarefree integers.

Using the Chinese Remainder Theorem, it is easy to construct runs
of consecutive integers of order approximately $\log x$ that avoid
squarefree integers (and thus primes as well). We expect that a natural
barrier of a similar type occurs for sequences generated by the $6$-GP-free
process, barring it from directly solving the original question of
Beiglböck, Bergelson, Hindman and Strauss on syndetic sequences. Nevertheless
improving Theorem \ref{thm:main} to the order of $\log x$ would
already be significant. We make the following conjecture.
\begin{conjecture}
There exists a constant $C>0$ such that with nonzero probability,
the sequence $T$ generated by the $6$-GP-free process has gaps of
size $O((\log x)^{C})$.
\end{conjecture}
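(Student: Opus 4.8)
The plan is to mirror the two-step architecture of the proof of Theorem~\ref{thm:main}, but with the long intervals $h(x)=\exp((C_{2,3}+2\varepsilon)\log x/\log\log x)$ replaced by the polylogarithmic length $h(x)=(\log x)^{C}$. The two steps are: (i) a pointwise bound on the probability that $T$ misses a single interval $(x,x+Ch]$, and (ii) a first-moment argument (equivalently, a union bound) over all $x\in\mathbb{N}$ that upgrades ``each interval is hit with high probability'' to ``with positive probability every interval is hit at once,'' so that no gap exceeds $O((\log x)^{C})$. Step (i) transfers with no change; the entire difficulty is concentrated in supplying, for step (ii), a lower bound on the short sum $S_{2,3}(x,h,\log2)$ over an interval of polylogarithmic length, in place of Lemma~\ref{lem:short}.

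For step (i) I would simply observe that the derivation of Lemma~\ref{lem:main} uses nothing about the size of $h$ beyond $h<\sqrt{x}$. Indeed, the two middle terms $a_ib_i^3c_i^2$ and $a_ib_i^2c_i^3$ of any $G_{6,i}$ differ by at least $a_ib_i^2c_i^2\ge\sqrt{n}$, so once $h<\sqrt{x}$ no two integers of $(x,x+Ch]$ can be the middle terms of a common progression; the events $\{n\notin U\}$ of Definition~\ref{6-gp} are then independent, and multiplying through $1-t\le e^{-t}$ gives
\[
\mathrm{P}[T\cap(x,x+Ch]=\emptyset]\le\exp\Big(-S_{2,3}(x,Ch,\log2)\Big).
\]
Since $(\log x)^{C}<\sqrt{x}$ for all large $x$, this holds verbatim with $h=(\log x)^{C}$, and the randomized construction is otherwise untouched.

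The crux is step (ii): a lower bound on $S_{2,3}(x,(\log x)^{C},\log2)=\sum_{x<n\le x+h}2^{-d_{3,2}(n)}$. To make the union bound converge I need $S_{2,3}(x,h,\log2)\ge(1+\delta)\log x$ uniformly in $x$, so that $\mathrm{P}[T\cap(x,x+Ch]=\emptyset]\le x^{-1-\delta}$ is summable; choosing the leading constant large then forces $\sum_{x>N}\mathrm{P}[\,\cdots\,]<1$, whence by linearity of expectation the expected number of missed intervals with $x>N$ is below $1$, so with positive probability none are missed and (after enlarging the constant to absorb the finitely many $x\le N$) all gaps of $T$ are $O((\log x)^{C})$, exactly as in Theorem~\ref{thm:main}. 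Because $d_{3,2}(n)=1$ precisely when $n$ is squarefree, there is the clean lower bound $S_{2,3}(x,h,\log2)\ge\tfrac12\,Q(x,h)$, where $Q(x,h)$ denotes the number of squarefree integers in $(x,x+h]$; more generally each $n$ with $d_{3,2}(n)\le K$ contributes at least $2^{-K}$, so it is enough to find $\gg\log x$ integers of bounded $d_{3,2}$ in the interval.

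Here lies the main obstacle, and the reason the statement is only conjectured. Lemma~\ref{lem:short} produces precisely the lower bound on $S_{2,3}$ that is wanted, but its smoothing argument survives only down to $h=\exp((C_{2,3}+2\varepsilon)\log x/\log\log x)$: for $h=(\log x)^{C}$ one has $\log n/\log h\asymp\log x/\log\log x$, which no longer forces the exponents $\alpha_k$ to be few, so a single $n$ can carry $d_{3,2}(n)$ as large as the Wigert order $\exp(\Theta(\log x/\log\log x))$ and its weight $2^{-d_{3,2}(n)}$ can be negligible. Worse, the Chinese Remainder Theorem manufactures runs of length $\gg\log x/\log\log x$ on which every integer is divisible by a square, so $Q(x,h)=0$ and no bound of the shape $Q(x,h)\gg h$ can be uniform once $h\lesssim\log x/\log\log x$. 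Thus the argument requires, for some fixed $C$, the uniform estimate $Q(x,(\log x)^{C})\gg\log x$ --- a lower bound on squarefree (or merely bounded-$d_{3,2}$) integers in polylogarithmic intervals that lies well beyond current technology. I expect the decisive step to be a quantitative version of the covering obstruction: showing that the powerful integers cannot saturate an interval of length $(\log x)^{C}$ in the $d_{3,2}$-weighted sense, because forcing $d_{3,2}$ to be large throughout costs proportionally many congruence conditions and so shrinks the available run. Turning this heuristic into a uniform lower bound on $S_{2,3}$ in polylogarithmic intervals is what separates the conjecture from Theorem~\ref{thm:main}.
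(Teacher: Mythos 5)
This statement is a conjecture that the paper deliberately leaves open --- it has no proof in the paper, and your proposal, by its own admission, does not supply one either. Your analysis of the architecture is accurate and matches the paper's discussion: step (i) does transfer verbatim, since the derivation of Lemma \ref{lem:main} uses only $Ch<\sqrt{x}$ to guarantee that no two integers of $(x,x+Ch]$ are middle terms of a common $G_{6,i}$, so $\mathrm{P}[T\cap(x,x+Ch]=\emptyset]\leq\exp(-S_{2,3}(x,Ch,\log 2))$ holds for $h=(\log x)^{C}$; and your identification of the obstruction is exactly the one the paper itself flags in its closing remarks (the Wigert-order growth of $d_{i,j}$, and the Chinese Remainder Theorem runs of length roughly $\log x/\log\log x$ on which every integer is divisible by a square). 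Your observation that $d_{3,2}(n)=1$ precisely for squarefree $n$, so that $S_{2,3}(x,h,\log 2)\geq\frac{1}{2}Q(x,h)$, is correct and is a clean way to see the connection to the squarefree-gaps problem.

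But the crux --- the uniform lower bound $S_{2,3}(x,(\log x)^{C},\log 2)\gg\log x$, or even the far weaker $Q(x,(\log x)^{C})>0$ --- is left as a heuristic (``forcing $d_{3,2}$ to be large throughout costs proportionally many congruence conditions''), and this heuristic is essentially the conjecture itself restated. Nothing in current technology approaches it: the best unconditional bound on gaps between squarefree integers is the Filaseta--Trifonov $O(x^{1/5}\log x)$ cited in the paper, and even the $abc$ conjecture only yields $O(x^{\varepsilon})$ via Granville, both exponentially far from polylogarithmic intervals. So the proposal correctly reduces the conjecture to an open estimate, but closes no gap; as a proof it fails at precisely the step that makes the statement a conjecture rather than a theorem, and no alternative route around the short-sum lower bound is offered. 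Note also that the smoothing argument of Lemma \ref{lem:short} degrades exactly as you say: for $h=(\log x)^{C}$ one loses control of $\sum_{k}\alpha_{k}$, so your diagnosis of where the paper's method stops is the right one --- it is the repair that is missing.
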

It was originally a conjecture of Cramér \cite{Cramer} that the primes
behave on a large scale as if they are randomly chosen out of the
integers, with $n$ prime with probability $(\log n)^{-1}$. Of course
this agrees with the Prime Number Theorem, but for short intervals
of size $O((\log x)^{C})$ a number of significant discrepancies between
Cramér's predictions and the distribution of primes have been found
by Maier \cite{Maier} and Pintz \cite{Pintz}. Our method is based
on the intuition that a truly randomized sequence generated by the
$6$-GP-free process or a similar probabilistic construction avoids
these issues while still carrying the required multiplicative structure.

In the other direction, using a computer search we have found that
any sequence of positive integers with gaps of size at most one contains
$3$-GP's. In particular, an exhaustive search of all sequences containing
one of every pair of consecutive integers in $[1,640]$ showed that
all such sequences contain at least one $3$-GP. This is weak evidence
that the original problem of Beiglböck, Bergelson, Hindman and Strauss
about syndetic sequences is unlikely to be true, and in fact all syndetic
sequences must contain arbitrarily long $k$-term geometric progressions.

\section*{Acknowledgements}

This research was conducted as part of the University of Minnesota
Duluth REU program of 2014, supported by National Science Foundation
grant number DMS-1358659 and National Security Agency grant number
H98230-13-1-0273. Thanks go out to Joe Gallian for writing advice
and for organizing the REU program, as well as Kevin O'Bryant, Levent
Alpoge and Samuel Zbarsky for providing ideas in the early stages
of this research. Also, I am grateful to Noah Arbesfeld, Daniel Kriz
and Arul Shankar for useful comments on this paper. Also, the author
is grateful to the anonymous referee for writing suggestions.

\end{document}